\documentclass[graybox,footinfo]{svmult}
\smartqed
\usepackage{mathptmx}       
\usepackage{helvet}         
\usepackage{courier}        
\usepackage{type1cm}        
\usepackage{graphicx}       

\usepackage{array,colortbl}
\usepackage{amsmath,amsfonts,amssymb,bm} 

\hyphenation{kernel tract-ability remains para-meters always satis-fying}

\newcommand{\cl}{{\mathcal L}}

\newcommand{\bx}{{\boldsymbol{x}}}

\newcommand{\balpha}{{\boldsymbol{\alpha}}}
\newcommand{\bgamma}{{\boldsymbol{\gamma}}}

\def\il{\left<}
\def\ir{\right>}

\def\a{\alpha }
\def\e{\varepsilon }
\def\phi{\varphi }
\def\epsilon{\varepsilon }
\def\g{\gamma }

\def\rho{\varrho }

\def\lstd{{\Lambda^{\rm std}}}
\def\lall{{\Lambda^{\rm all}}}
\newcount\refnum
\refnum = 0
\def\refx{\global\advance\refnum by 1 {\the\refnum . \ }}

\def\({\biggl( }
\def\){\biggr) }

\newcommand{\ch}{\mathcal{H}}

\newcommand{\bt}{\boldsymbol{t}}

\newcommand{\tK}{\widetilde{K}}
\newcommand{\hK}{\widehat{K}}

\newcommand{\naturals}{\mathbb{N}}
\newcommand{\reals}{\mathbb{R}}

\def\e{\varepsilon}

\newcommand{\stdall}{\vartheta}
\newcommand{\absnorm}{\psi}

\usepackage{microtype} 

\usepackage[colorlinks=true,linkcolor=black,citecolor=black,urlcolor=black]{hyperref}
\urlstyle{same}
\usepackage{bookmark}
\pdfstringdefDisableCommands{\def\and{, }}
\makeatletter 
  \providecommand*{\toclevel@author}{999}
  \providecommand*{\toclevel@title}{0}
\makeatother

\begin{document}
\titlerunning{Tractability of Approximation With Product Kernels}
\title*{Tractability of the Function Approximation Problem in Terms of the Kernel's Shape and Scale Parameters}
\author{Xuan Zhou \and Fred J.\ Hickernell}
\institute{
Xuan Zhou
\at Department of Applied Mathematics, Illinois Institute of Technology, Room E1-232,
10 W. 32$^{\text{nd}}$ Street, Chicago, IL 60616
\email{xzhou23@hawk.iit.edu}
\and
Fred J.\ Hickernell
\at Department of Applied Mathematics, Illinois Institute of Technology, Room E1-208,
10 W. 32$^{\text{nd}}$ Street, Chicago, IL 60616
\email{hickernell@iit.edu}
}

\maketitle

\abstract{
This article studies the problem of approximating functions belonging to a Hilbert space $\ch_d$ with a reproducing kernel of the form
$$\tK_d(\bx,\bt):=\prod_{\ell=1}^d \left(1-\a_\ell^2+\a_\ell^2K_{\g_\ell}(x_\ell,t_\ell)\right)\ \ \ \mbox{for all}
\ \ \ \bx,\bt\in\reals^d.$$
The $\a_\ell\in[0,1]$ are scale parameters, and the $\gamma_\ell>0$ are sometimes called shape parameters.  The reproducing kernel $K_{\g}$ corresponds to some Hilbert space of functions defined on $\reals$. The kernel $\tK_d$ generalizes the anisotropic Gaussian reproducing kernel, whose tractability properties have been established in the literature.  We present sufficient conditions on $\{\a_\ell \g_\ell\}_{\ell=1}^{\infty}$ under which polynomial tractability holds for function approximation problems on $\ch_d$. The exponent of strong polynomial tractability arises from bounds on the eigenvalues of a positive definite linear operator.
}

%

\section{Introduction}\label{intro}
This article addresses the problem of function approximation.
In a typical application we are given data of the
form $y_i=f(\bx_i)$ or $y_i=L_i(f)$ for $i=1,\ldots, n$.
That is, a function $f$ is sampled at the locations $\{\bx_1, \ldots, \bx_n\}$,
usually referred to as the \emph{data sites} or the \emph{design},
or more generally we know the values of $n$ linear functionals $L_1, \ldots, L_n$ applied to $f$. Here we assume that the domain of $f$ is a subset of
$\reals^d$.  The goal is to construct $A_n(f)$, a good approximation
to $f$ that is inexpensive to evaluate.

Algorithms for function approximation based on symmetric
positive definite kernels have arisen in both the numerical
computation literature \cite{Buh03a,Fas07a,SchWen06a,Wen05a}, and the
statistical learning literature
\cite{BerT-A04,CucZho07a,HasTibFrie01,RasWil06a,SchSmo02a,Ste99,SteChr08a,Wah90}.
These algorithms go by a variety of names, including radial
basis function methods \cite{Buh03a},
scattered data approximation \cite{Wen05a},
meshfree methods \cite{Fas07a}, (smoothing) splines \cite{Wah90},
kriging \cite{Ste99}, Gaussian process models \cite{RasWil06a} and
support vector machines \cite{SteChr08a}.

Many kernels commonly used in practice are associated with a sequence of shape parameters $\bgamma=\{\g_\ell\}_{\ell=1}^{\infty}$, which allows more flexibility in the function approximation problem. Examples of such kernels include the Mat\'{e}rn, the multiquadrics, the inverse multiquadrics, and the extensively studied Gaussian kernel (also known as the squared exponential kernel). The anisotropic stationary Gaussian kernel, is given by
\begin{equation} \label{anisoGauss}
\tK_d(\bx,\bt) := \E^{-\gamma_1^2 (x_1-t_1)^2 -\, \cdots \,
- \gamma_d^2 (x_d-t_d)^2} =
\prod_{\ell=1}^d \E^{-\gamma_\ell^2 (x_\ell-t_\ell)^2}\ \ \ \mbox{for all}
\ \ \ \bx,\bt\in\reals^d,
\end{equation}
where $\gamma_\ell$ is a positive shape parameter for each variable $x_\ell$. Choosing a small $\gamma_\ell$ has a beneficial effect on the rate of decay of the eigenvalues of the Gaussian kernel. The optimal choice of $\gamma_\ell$ is application dependent and much work has been spent on the quest for the optimal shape parameter. Note that taking $\gamma_\ell=\gamma$ for all $\ell$ will recover the isotropic Gaussian kernel.

For the Gaussian kernel \eqref{anisoGauss}, convergence rates with polynomial and tractability results are established in \cite{FasHicWoz12a}. These rates are summarized in Table~\ref{summarytable}. Note that the error of an algorithm $A_n$ in this context is the worst case error based on the following $\cl_2$ criterion:
\begin{equation} \label{l2normdef}
e^{\rm wor}(A_n):=\sup_{\|f\|_{\ch_d}\le1}\|f-A_n(f)\|_{\cl_2}, \qquad \|f\|_{\cl_2}:=\left(\int_{\reals^d}f^2(\bt)\,\rho_d(\bt)\,{\rm d}\bt\right)^{1/2},
\end{equation}
where $\rho_d$ is a probability density function with independent marginals, namely $\rho_d(\bx)=\rho_1(x_1) \cdots \rho_1(x_d)$.
For real $q$, the notation $\preceq n^{q}$ (with $n \to \infty$ implied) means
that for all $\delta > 0$
the quantity is bounded \emph{above} by $C_{\delta}n^{q+\delta}$ for all $n > 0$, where $C_{\delta}$
is some positive constant
that is independent of the sample size, $n$,
and the dimension, $d$, but
may depend on $\delta$.
The notation $\succeq n^{q}$
is defined analogously, and means that the quantity is
bounded from \emph{below} by $C_{\delta}n^{q-\delta}$
for all $\delta > 0$.
The notation $\asymp n^{q}$ means that the quantity is
both $\preceq n^{q}$  and $\succeq n^{q}$.
\begin{table}
\caption{Error decay rates for the Gaussian kernel as a function of sample size $n$}\label{summarytable}
\begin{tabular}{p{2.9cm}p{4.2cm}p{4.2cm}}
\hline\noalign{\smallskip}
Data Available & Absolute Error Criterion & Relative Error Criterion  \\
\noalign{\smallskip}\svhline\noalign{\smallskip}
Linear Functionals & $\asymp n^{-\max(r(\bgamma),1/2)}$ & $\asymp n^{-r(\bgamma)}$, if $r(\bgamma)>0$\\
Function values & $\preceq n^{-\max(r(\bgamma)/[1+1/(2r(\bgamma))],1/4)}$ & $\preceq n^{-r(\bgamma)/[1+1/(2r(\bgamma))]}$, if $r(\bgamma)>1/2$\\
\noalign{\smallskip}\hline\noalign{\smallskip}
\end{tabular}
\end{table}
The term $r(\bgamma)$ appearing in Table \ref{summarytable}
denotes the \emph{rate of convergence} to zero of the shape parameter
sequence $\bgamma$ and is defined by
\begin{equation} \label{wgammaform}
r(\bgamma):=\sup\bigg\{\beta>0\, \bigg |\ \sum_{\ell=1}^{\infty}
\gamma_\ell^{1/\beta} < \infty \bigg\}.
\end{equation}

The kernel studied in this article has a more general product form
\begin{equation}\label{GeneralKernel}
\tK_d(\bx,\bt)=\tK_{\balpha,\bgamma,d}(\bx,\bt) :=\prod_{\ell=1}^d \widehat K_{\a_\ell,\g_\ell}(x_\ell,t_\ell)\ \ \ \mbox{for all}\ \ \ \bx,\bt\in\reals^d,
\end{equation}
where $0\le\alpha_\ell \le1,\ \g_\ell>0$ and
\begin{equation}\label{hatK}
\hK_{\a,\g}(x,t):=1-\a^2+\a^2K_{\g}(x,t),\qquad x,t\in\reals.
\end{equation}
We assume that we know the eigenpair expansion of the one-dimensional kernel $K_{\g}$ in terms of its shape parameter $\g$. Many kernels in the numerical integration and approximation literature take the form of \eqref{GeneralKernel}, where $\a_\ell$ governs the vertical scale of the kernel across the $\ell$th dimension. In particular, taking $\a_\ell=1$ for all $\ell$ and $K_{\g}(x,t)=\exp(-\g^2(x-t)^2)$ recovers the anisotropic Gaussian kernel \eqref{anisoGauss}.

The goal of this paper is to extend the results in Table \ref{summarytable} to the kernel in \eqref{GeneralKernel}.  In essence we are are able to replace $r(\bgamma)$ by $\tilde r(\balpha,\bgamma)$ defined below in \eqref{tilder}.

Knowing the eigenpair expansion of $K_{\g}(x,t)$ does not give us explicit formulae for the eigenvalues and eigenfunctions of the kernel $\hK_{\a,\g}$. However, since the kernel \eqref{GeneralKernel} is of tensor product form and each factor is a convex combination of the constant kernel and a kernel with a known eigenpair expansion, we can derive upper and lower bounds of the eigenvalues of $\hK_{\a,\g}$ by approximating the corresponding linear operators by finite rank operators and applying some inequalities for eigenvalues of matrices. These bounds then yield tractability results for the general kernels.

\section{Function Approximation}
\subsection{Reproducing Kernel Hilbert Spaces}
Let $\ch_d=\ch(\tK_d)$ denote a reproducing kernel Hilbert space of real functions defined on $\reals^d$.  The goal is to approximate any function in $\ch_d$ given a finite number of
data about it.  The reproducing kernel $\tK_d:\reals^d\times \reals^d\to\reals$
is symmetric and positive definite.  It takes the form
\eqref{GeneralKernel}, where $K_{\g}$ satisfies the unit trace condition
\begin{equation}
\int_{\reals}K_{\g}(t,t)\,\rho_1(t)\,\D t=1 \qquad \forall \gamma>0.\label{unittrace}
\end{equation}

This condition implies that $\ch_d$ is continuously embedded in the space
$\cl_2=\cl_2(\reals^d,\rho_d)$ of square Lebesgue integrable functions,
where the $\cl_2$ norm was defined in \eqref{l2normdef}. Continuous embedding means that
$\|I_df\|_{\cl_2} = \|f\|_{\cl_2} \le \|I_d\|\ \|f\|_{\ch_d}$
for all $f\in \ch_d$.

Functions in $\ch_d$ are approximated by linear algorithms of the form
\begin{equation*}\label{linearnonadaptive}
(A_nf)\,(\bx):=\sum_{j=1}^nL_j(f)a_j(\bx)\ \ \
\mbox{for all}\ \ \  f\in \ch_d,\  \ \bx\in\reals^d\ \ \ \mbox{for all}
\ \ \ \bx\in\reals^d.
\end{equation*}
for some continuous linear functionals $L_j\in \ch_d^*$,
and functions $a_j\in \cl_2$. Note that for known functions $a_j$, the cost of computing $A_n(f)\,(x)$ is $\mathcal O(n)$, if we do not consider the cost of generating the data samples $L_j(f)$.
The linear functionals, $L_j$, used by an algorithm $A_n$ may either
come from the class of arbitrary bounded
linear functionals, $\lall = \ch_d^*$, or from
the class of function evaluations, $\lstd$.
The \emph{$n$th minimal worst case error} over all possible
algorithms is defined as
\begin{equation*}
e^{\text{wor-$\stdall$}}(n,\ch_d)
:=\inf_{A_n \ {\rm with}\ L_j\in\Lambda^{\stdall} }e^{\text{wor}}(A_n)
\quad \stdall \in \{{\rm std},{\rm all}\}.
\end{equation*}

\subsection{Tractability}While typical numerical analysis focuses on the rate of convergence, it does not take into consideration the effects of $d$.
The study of tractability arises in information-based complexity
and it considers how the error depends on the dimension, $d$,
as well as the number of data, $n$.

In particular, we would like to know how
$e^{\text{wor-$\stdall$}}(n,\ch_d)$
depends not only on $n$ but also on $d$.
Because of the focus on $d$-dependence,
the \emph{absolute} and \emph{normalized} error criteria
described in the previous section may lead to different answers.
For a given
positive $\e\in(0,1)$ we want to find
an algorithm $A_n$ with the smallest~$n$ for which the error does not
exceed $\e$ for the absolute error criterion, and does not exceed
$\e\,e^{\text{wor-$\stdall$}}(0,\ch_d)=\e\,\|I_d\|$
for the normalized error criterion. That is,
$$
n^{\text{wor-$\absnorm$-$\stdall$}}(\e,\ch_d)
= \min\left\{n\,|\
e^{\text{wor-$\stdall$}}(n,\ch_d)
\le\begin{cases}\e, & \absnorm = {\rm abs},\\
\e\,\|I_d\|, & \absnorm = {\rm norm},
\end{cases} \quad \right\}.
$$

Let $\mathcal{I}=\{I_d\}_{d \in \naturals}$ denote the sequence of
function approximation
problems. We say that $\mathcal{I}$ is~\emph{polynomially tractable} if and only if
there exist numbers $C$, $p$ and $q$ such that
\begin{equation} \label{tractdef}
n^{\text{wor-$\absnorm$-$\stdall$}}(\e,\ch_d)\le C\,d^{\,q}\,\e^{-p}\ \ \
\mbox{for all}\ \ \ d\in \naturals\ \ \mbox{and}\ \ \ \e\in(0,1).
\end{equation}
If $q=0$ above then we say that $\mathcal{I}$ is~\emph{strongly polynomially
tractable} and the infimum of~$p$ satisfying the bound above is
the~\emph{exponent} of strong polynomial tractability.

The essence of polynomial tractability is to guarantee that
a polynomial number of linear functionals is enough to satisfy
the function approximation problem to within $\e$.
Obviously, polynomial tractability depends on which class, $\lall$ or
$\lstd$, is considered and whether the absolute or normalized error is used.

The property of strong polynomial tractability is especially
challenging since then the number of linear functionals needed for an
$\e$-approximation is independent of $d$. Nevertheless, we provide here positive results on
strong polynomial tractability.

\section{Eigenvalues for the General Kernel}
Let us define the linear operator corresponding to any kernel $\tK_d$ as
\begin{equation*}\label{kernelOper}
Wf=\int_{\reals^d}f(\bt)\tK_d(\cdot,\bt)\rho_d(\bt) \, \D \bt\ \ \ \mbox{for all}\ \ \ f\in\ch_d.
\end{equation*}
It is known that $W$ is self-adjoint and positive definite if $\tK_d$ is a positive definite kernel. Moreover \eqref{unittrace} implies that $W$ is compact. Let us define the  eigenpairs of $W$ by
$(\lambda_{d,j},\eta_{d,j})$, where
the eigenvalues are ordered, $\lambda_{d,1}\ge\lambda_{d,2}\ge\cdots$, and
$$
W\,\eta_{d,j}=\lambda_{d,j}\,\eta_{d,j} \ \ \ \mbox{with}\ \ \
\il \eta_{d,j},\eta_{d,i}\ir_{\ch_d}=\delta_{i,j} \ \ \mbox{for all}\ \
i,j\in\naturals.
$$
Note also that for any $f\in \ch_d$ we have
$$
\il f,\eta_{d,j}\ir_{\cl_2}=\lambda_{d,j}\il f,\eta_{d,j}\ir_{\ch_d}.
$$
Taking $f=\eta_{d,i}$ we see that $\{\eta_{d,j}\}$
is a set of orthogonal functions in $\cl_2$. Letting
$$
\varphi_{d,j}=\lambda_{d,j}^{-1/2}\eta_{d,j}\ \ \
\mbox{for all}\ \ \ j\in\naturals,
$$
we obtain an orthonormal sequence $\{\varphi_{d,j}\}$ in $\cl_2$.
Since $\{\eta_{d,j}\}$ is a complete
orthonormal basis of $\ch_d$, we have
\begin{equation*}\label{formofkd}
\tK_d(\bx,\bt)=\sum_{j=1}^\infty\eta_{d,j}(\bx)\,\eta_{d,j}(\bt)=
\sum_{j=1}^\infty\lambda_{d,j}\,\varphi_{d,j}(\bx)\,
\varphi_{d,j}(\bt) \ \ \
\mbox{for all}\ \ \ \bx,\bt\in \reals^d.
\end{equation*}

To standardize the notation, we shall always write the eigenvalues of the linear operator corresponding to the  kernel $\tK_{d,\balpha,\bgamma}$ in \eqref{GeneralKernel} in a weakly decreasing order $\nu_{d,\balpha,\bgamma,1}\ge\nu_{d,\balpha,\bgamma,2}\ge\cdots$.
We drop the dependency on the dimension $d$ to denote the eigenvalues of the linear operator corresponding to the one-dimensional kernel $\hK_{\a,\g}$ in \eqref{hatK} by $\tilde\nu_{\a,\g,1}\ge\tilde\nu_{\a,\g,2}\ge\cdots$. Similarly the eigenvalues of the linear operator corresponding to the one-dimensional kernel $K_\g(x,t)$ are denoted by $\tilde\lambda_{\g,1}\ge\tilde\lambda_{\g,2}\ge\cdots$. A useful relation between the multivariate eigenvalues $\nu_{d,\balpha,\bgamma,j}$ and the univariate eigenvalues $\tilde\nu_{\a,\g,j}$ is given by \cite[Lemma 3.1]{FasHicWoz12a}:
\begin{equation*}\label{evRelation}
\sum_{j=1}^\infty\nu_{d,\balpha,\bgamma,j}^\tau=\prod_{\ell=1}^d\left(\sum_{j=1}^\infty\tilde\nu_{\a_\ell,\g_\ell,j}^\tau\right) \qquad \forall \tau>0.
\end{equation*}

We are interested in the high dimensional case where $d$ is large, and we want to establish convergence and tractability results when $\a_\ell$ and/or $\gamma_\ell$ tend to zero as $\ell\to\infty$. According to \cite{NovWoz08a}, strong polynomial tractability holds if the eigenvalues are bounded. The following lemma provides us some useful inequalities on eigenvalues of the linear operators corresponding to reproducing kernels.
\begin{lemma}\label{lm1}
Let $\ch(K_A),\ch(K_B),\ch(K_C)\subset\cl_2(\reals,\rho_1)$ be Hilbert spaces with symmetric positive definite reproducing kernels $K_A$, $K_B$ and $K_C$ such that
\begin{equation}\label{lmcon1}
\int_\reals K_\kappa(t,t)\rho_1(t) \, \D t<\infty,\ \ \ {\kappa}\in \{A,B,C\},
\end{equation}
and $K_C=aK_A+bK_B$, $a,b\ge0$.
Define the linear operators $W_A$, $W_B$, and $W_C$ by
\begin{equation*}
W_{\kappa}f=\int_{\reals}f(t)K_{\kappa}(\cdot,t)\rho_1(t) \, \D t,\ \ \ \mbox{for all}\ \ \ f\in\ch(K_{\kappa}),\ \ \ {\kappa}\in \{A,B,C\}.
\end{equation*}
Let the eigenvalues of the operators be sorted in a weakly decreasing order, i.e. $\lambda_{\kappa,1}\ge\lambda_{\kappa,2}\ge\cdots$.
Then these eigenvalues satisfy
\begin{equation}\label{lm1b}
\lambda_{C,i+j+1}\le a\lambda_{A,i+1}+b\lambda_{B,j+1},\ \ \ i,j=1,2,\ldots
\end{equation}
\begin{equation}\label{lm1c}
\lambda_{C,i}\ge\max(a\lambda_{A,i},b\lambda_{B,i}),\ \ \ i=1,2,\ldots
\end{equation}
\end{lemma}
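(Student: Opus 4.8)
\section*{Proof proposal}

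The plan is to push everything onto a single Hilbert space, namely $\cl_2=\cl_2(\reals,\rho_1)$, on which all three kernels act by integration, and then to read off the two claimed bounds as the classical Weyl inequalities for the eigenvalues of a sum of positive compact self-adjoint operators. Concretely, I introduce the $\cl_2$ integral operators $T_\kappa f=\int_\reals f(t)K_\kappa(\cdot,t)\rho_1(t)\,\dif t$ for $\kappa\in\{A,B,C\}$. Under the trace condition \eqref{lmcon1}, each $T_\kappa$ is a compact, self-adjoint, positive operator on $\cl_2$ whose Mercer eigenvalues coincide with the reproducing-kernel eigenvalues $\lambda_{\kappa,j}$ of $W_\kappa$ (the eigenfunctions becoming orthonormal in $\cl_2$ after the rescaling $\varphi_{d,j}=\lambda_{d,j}^{-1/2}\eta_{d,j}$ recorded earlier). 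This identification is exactly what makes the comparison possible: although $W_A,W_B,W_C$ live on the \emph{different} spaces $\ch(K_A),\ch(K_B),\ch(K_C)$, their $\cl_2$ counterparts share the common domain $\cl_2$, and linearity of the integral together with $K_C=aK_A+bK_B$ yields the operator identity $T_C=aT_A+bT_B$.

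With this identity in hand I would apply the Courant--Fischer min-max characterization $\lambda_k(T)=\min_{\dim V=k-1}\max_{f\perp V,\,\|f\|=1}\il Tf,f\ir_{\cl_2}$. For \eqref{lm1b}, let $V$ be an $(i+j)$-dimensional subspace containing the top $i$ eigenfunctions of $T_A$ and the top $j$ eigenfunctions of $T_B$. For any unit $f\perp V$ one has $\il T_A f,f\ir_{\cl_2}\le\lambda_{A,i+1}$ and $\il T_B f,f\ir_{\cl_2}\le\lambda_{B,j+1}$; combining these with weights $a,b$ and maximizing over $f\perp V$ bounds $\lambda_{C,i+j+1}$ by $a\lambda_{A,i+1}+b\lambda_{B,j+1}$. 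For \eqref{lm1c} I would instead use positivity: since $b\ge 0$ and $T_B\succeq 0$, we have $\il T_C f,f\ir_{\cl_2}\ge a\il T_A f,f\ir_{\cl_2}$ for every $f$, so the max-min form of Courant--Fischer gives $\lambda_{C,i}\ge a\lambda_{A,i}$; the symmetric argument gives $\lambda_{C,i}\ge b\lambda_{B,i}$, and taking the larger of the two yields the claim.

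The delicate point, and the step I expect to be the main obstacle, is the first one: justifying rigorously that the $\ch(K_\kappa)$ spectrum of $W_\kappa$ and the $\cl_2$ spectrum of $T_\kappa$ agree, and that the three operators may legitimately be placed on the common space $\cl_2$ where they genuinely add. Once that reduction is secure, the two inequalities are precisely Weyl subadditivity (for \eqref{lm1b}) and Weyl monotonicity (for \eqref{lm1c}), so the second step is routine. An alternative that sidesteps any subtlety about infinite-dimensional min-max, and that matches the finite-rank strategy announced in the introduction, is to truncate each Mercer expansion to a finite-rank operator, invoke the corresponding Weyl inequalities for Hermitian matrices, and pass to the limit using continuity of the ordered eigenvalues. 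It is worth noting that the specific indexing $i+1$, $j+1$, $i+j+1$ in \eqref{lm1b} is exactly what such a matrix argument produces, and it is tailored to the later application in which $K_A$ is the rank-one constant kernel, so that $\lambda_{A,i+1}=0$ for $i\ge 1$ collapses \eqref{lm1b} into a clean one-sided bound on the eigenvalues of $\hK_{\a,\g}$.
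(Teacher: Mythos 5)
Your proposal is correct, and it rests on the same underlying fact as the paper's proof: the Weyl inequalities for the eigenvalues of a nonnegative linear combination of positive compact self-adjoint operators. The difference is in execution. You apply the Courant--Fischer characterization directly to the infinite-dimensional operators $T_\kappa$ on $\cl_2(\reals,\rho_1)$: a test subspace spanned by the top $i$ eigenfunctions of $T_A$ and the top $j$ eigenfunctions of $T_B$ yields \eqref{lm1b}, and operator monotonicity ($T_C\succeq aT_A$ and $T_C\succeq bT_B$ since $a,b\ge0$ and the summands are positive) yields \eqref{lm1c}. The paper instead proves only the matrix versions of these inequalities and transfers them to the operators by a limiting argument: it truncates with orthogonal projections $P_n$ onto the span of the first $n$ elements of an orthonormal basis of $\cl_2$, notes that $P_nW_CP_n=aP_nW_AP_n+bP_nW_BP_n$ corresponds to symmetric positive semidefinite matrices, applies the finite-dimensional inequalities there, and then invokes the singular-value perturbation bound $|s_j(T_1)-s_j(T_2)|\le\|T_1-T_2\|$ from Pietsch to conclude that the truncated eigenvalues converge to the true ones. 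Your second, ``alternative'' paragraph is essentially this argument, so you have anticipated the paper's route as well. The one point both treatments leave somewhat implicit is exactly the issue you flag as delicate: the operators $W_\kappa$ are defined on the distinct spaces $\ch(K_\kappa)$ yet must be added, so one has to pass to the associated $\cl_2$ integral operators (which share their nonzero spectrum with the $W_\kappa$, both factoring through the embedding $\ch(K_\kappa)\hookrightarrow\cl_2$) before either the min-max or the projection argument applies; making that identification explicit would complete either proof. Your direct min-max route is arguably cleaner, since it dispenses with the perturbation lemma and the limit; the paper's route buys a reduction to entirely elementary facts about Hermitian matrices.
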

\begin{proof}
Let $\{u_j\}_{j\in\naturals}$ be any orthonormal basis in $\cl_2(\reals,\rho_1)$. We assign the orthogonal projections $P_n$ given by
\begin{equation*}\label{Pn}
P_nx=\sum_{j=1}^n\il x,u_j\ir u_j,\ \ \ x\in\cl_2(\reals,\rho_1).
\end{equation*}
Since $W_A$ is compact due to \eqref{lmcon1}, it can be shown that $\|(I-P_n)W_A\|\to0$ as $n\to\infty$, where the operator norm
$$\|(I-P_n)W_A\|:=\sup_{\|x\|\le1}\|(I-P_n)W_Ax\|_{\cl_2(\reals,\rho_1)}.$$
Furthermore \cite[Lemma 11.1 ($OS_2$)]{Pie80} states that for every pair $T_1,T_2:X\to Y$ of compact operators we have
$|s_j(T_1)-s_j(T_2)|\le\|T_1-T_2\|,\ j\in\naturals,$
where the singular values $s_j(T_k),k=1,2$ are the square roots of the eigenvalues $\lambda_j(T_k^*T_k)$ arranged in a weakly decreasing order, thus $s_j(T_k)=\sqrt{\lambda_j(T_k^*T_k)}$. Now we can bound
\begin{align*}
|s_j(W_A)-s_j(P_nW_AP_n)|&\le|s_j(W_A)-s_j(P_nW_A)|+|s_j(P_nW_A)-s_j(P_nW_AP_n)|\\
&\le\|W_A-P_nW_A\|+\|P_nW_A-P_nW_AP_n\|\\
&\le\|(I-P_n)W_A\|+\|W_A(I-P_n)\|\to0
\end{align*}
as $n\to\infty$. Thus the eigenvalues $\lambda_{P_nW_AP_n,j}\to\lambda_{W_A,j}$ for all $j$ as $n\to\infty$. Similarly this applies to the operators $W_B$ and $W_C$. Note that we have
$$P_nW_CP_n=aP_nW_AP_n+bP_nW_BP_n$$
and these finite rank operators correspond to self-adjoint matrices. These matrices are symmetric and positive definite because the kernels are symmetric and positive definite. Since the inequalities \eqref{lm1b} and \eqref{lm1c} hold for the eigenvalues of symmetric positive definite matrices, they also hold for the operators corresponding to symmetric and positive definite kernels.
\qed
\end{proof}

We are now ready to present the main results of this article in the following two sections.

\section{Tractability for the Absolute Error Criterion}\label{secAbs}
We now consider the function approximation problem for Hilbert spaces $\ch_d=\ch(\widetilde K_d)$ with a general kernel using the absolute error criterion. From the discussion of eigenvalues in the previous section and from \eqref{unittrace} it follows that
\begin{equation}\label{evsum}
\sum_{j=1}^\infty\tilde\lambda_{\g,j}=\int_\reals K_{\g}(t,t)\rho_1(t)\, \D t=1, \qquad \forall \g>0.
\end{equation}
We want to verify whether polynomial tractability holds, namely whether \eqref{tractdef} holds.

\subsection{Arbitrary Linear Functionals}
We first analyze the class $\lall$ and  polynomial
tractability. Similar to \eqref{wgammaform}, let us define the rate of decay of scale and shape parameters $\tilde r(\balpha,\bgamma)$ as
\begin{equation}\label{tilder}
\tilde r(\balpha,\bgamma)=\sup\bigg\{\beta>0\bigg|\sum_{\ell=1}^\infty(\a_\ell\gamma_\ell)^{1/\beta}<\infty\bigg\}
\end{equation}
with the convention that the supremum of the empty set is taken to be zero.
\begin{theorem}\label{ThmAbsAll}
Consider the function approximation problem $\mathcal{I}=\{I_d\}_{d \in \naturals}$
for Hilbert spaces for the class $\lall$ and
the absolute error criterion with the kernels \eqref{GeneralKernel} satisfying \eqref{evsum}. Let $\tilde r(\balpha,\bgamma)$ be given by \eqref{tilder}.
If there exist constants $C_1,C_2,C_3>0$, which are independent of $\gamma$ but may depend on $\tilde r(\balpha,\bgamma)$ and $\sup\{\g_\ell|\ell\in\naturals\}$, such that
\begin{eqnarray}
\int_{\reals^2}K_{\g}(x,t)\rho_1(x)\rho_1(t)dxdt&\ge&1-C_1\g^2,\label{gtc1}\\
C_2\le\sum_{j=2}^\infty\left(\frac{\tilde\lambda_{\g,j}}{\gamma^2}\right)^{\frac1{2\tilde r(\balpha,\bgamma)}}&\le& C_3\label{gtc2}
\end{eqnarray}
hold for all $0 < \gamma < \sup\{\g_\ell|\ell\in\naturals\}$, then it follows that
\begin{itemize}
\item $\mathcal{I}$
is strongly polynomially tractable
with exponent
$$
p^{\rm all}=\min\left(2,\frac1{\tilde r(\balpha,\bgamma)}\right).
$$
\item
For all $d\in\naturals$ we have
\begin{eqnarray*}
e^{\text{\rm wor-all}}(n,\ch_d)&\preceq &n^{-1/p^{\rm all}} = n^{-\max(\tilde r(\balpha,\bgamma),1/2)} \quad n \to \infty,\\
n^{\text{\rm wor-abs-all}}(\e,\ch_d)& \preceq &
\e^{-p^{\rm all}} \quad \e \to 0,
\end{eqnarray*}
where $\preceq n^{q}$ with $n \to \infty$ was defined in Section \ref{intro}, and $\preceq\e^{q}$ with $\e \to 0$ is analogous to $\preceq (1/\e)^{-q}$ with $1/\e \to \infty$.
\item
For the isotropic kernel with $\a_\ell=\a$ and $\g_\ell=\g$ for all $\ell$, the exponent of
strong tractability is $2$. 
Furthermore strong polynomial tractability is equivalent to polynomial tractability.
\end{itemize}
\end{theorem}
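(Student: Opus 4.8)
The plan is to translate the whole statement into estimates on the eigenvalue sums $\sum_{j}\nu_{d,\balpha,\bgamma,j}^{\tau}$ and to exploit their multiplicativity. For the class $\lall$ and the absolute error criterion, the theory of \cite{NovWoz08a} gives $e^{\text{\rm wor-all}}(n,\ch_d)=\nu_{d,\balpha,\bgamma,n+1}^{1/2}$ and $n^{\text{\rm wor-abs-all}}(\e,\ch_d)=\#\{j:\nu_{d,\balpha,\bgamma,j}>\e^{2}\}$, and strong polynomial tractability with exponent $p^{\rm all}=2\tau^{*}$ holds, where
\[
\tau^{*}=\inf\Bigl\{\tau>0:\ \sup_{d\in\naturals}\sum_{j=1}^{\infty}\nu_{d,\balpha,\bgamma,j}^{\tau}<\infty\Bigr\}.
\]
Thus the first two bullets reduce to showing $\tau^{*}=\min\bigl(1/(2\tilde r(\balpha,\bgamma)),\,1\bigr)$: then $p^{\rm all}=2\tau^{*}=\min(2,1/\tilde r(\balpha,\bgamma))$, the Chebyshev-type estimate $\#\{j:\nu_{d,j}>\e^{2}\}\le\e^{-2\tau}\sum_{j}\nu_{d,j}^{\tau}$ yields $n^{\text{\rm wor-abs-all}}(\e,\ch_d)\preceq\e^{-p^{\rm all}}$, and $n\,\nu_{d,n+1}^{\tau}\le\sum_{j}\nu_{d,j}^{\tau}$ yields $e^{\text{\rm wor-all}}(n,\ch_d)\preceq n^{-1/(2\tau)}\preceq n^{-1/p^{\rm all}}$.

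By \cite[Lemma 3.1]{FasHicWoz12a} the $\tau$-sum factors as $\sum_{j}\nu_{d,\balpha,\bgamma,j}^{\tau}=\prod_{\ell=1}^{d}S_{\ell}(\tau)$ with $S_{\ell}(\tau):=\sum_{j}\tilde\nu_{\a_\ell,\g_\ell,j}^{\tau}$, so the whole problem sits in the univariate factor. Writing $\hK_{\a,\g}=(1-\a^{2})\cdot 1+\a^{2}K_{\g}$, I apply Lemma \ref{lm1} with $K_{A}\equiv 1$—whose operator has the single nonzero eigenvalue $1$ by \eqref{unittrace}—and $K_{B}=K_{\g}$. Inequality \eqref{lm1b} with $i=1$ gives $\tilde\nu_{\a,\g,k}\le\a^{2}\tilde\lambda_{\g,k-1}$ for $k\ge3$; the index $k=2$, which the lemma does not reach, is handled by the Rayleigh quotient of the constant function: testing against $1$ and using \eqref{gtc1} gives $\tilde\nu_{\a,\g,1}\ge 1-C_{1}\a^{2}\g^{2}$, and since $\hK_{\a,\g}$ has unit trace this forces $\tilde\nu_{\a,\g,2}\le 1-\tilde\nu_{\a,\g,1}\le C_{1}(\a\g)^{2}$. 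The hypothesis \eqref{gtc2}, which I first extend from the exponent $\tau_{0}:=1/(2\tilde r(\balpha,\bgamma))$ to every $\tau\ge\tau_{0}$ by factoring $\tilde\lambda_{\g,j}^{\tau-\tau_{0}}\le\tilde\lambda_{\g,2}^{\tau-\tau_{0}}\le(C_{1}\g^{2})^{\tau-\tau_{0}}$, then gives $\sum_{k\ge2}\tilde\lambda_{\g,k}^{\tau}\le C\g^{2\tau}$ and hence $\sum_{j\ge2}\tilde\nu_{\a,\g,j}^{\tau}\le C(\a\g)^{2\tau}$. With $\tilde\nu_{\a,\g,1}\le1$ this produces the clean bound $S_{\ell}(\tau)\le 1+C(\a_\ell\g_\ell)^{2\tau}$, the constant $C$ being uniform in $\ell$ because $C_{1},C_{3}$ are independent of $\gamma$.

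The product then collapses: $\sum_{j}\nu_{d,\balpha,\bgamma,j}^{\tau}=\prod_{\ell=1}^{d}S_{\ell}(\tau)\le\exp\!\bigl(C\sum_{\ell=1}^{\infty}(\a_\ell\g_\ell)^{2\tau}\bigr)$, and by the definition \eqref{tilder} this series converges, uniformly in $d$, exactly when $2\tau>1/\tilde r(\balpha,\bgamma)$; this gives $\tau^{*}\le 1/(2\tilde r(\balpha,\bgamma))$. Independently, the unit-trace identity $\sum_{j}\nu_{d,\balpha,\bgamma,j}=\prod_{\ell}1=1$ makes $\tau=1$ admissible for free, so $\tau^{*}\le1$ always—this is precisely the mechanism capping $p^{\rm all}$ at $2$ when $\tilde r(\balpha,\bgamma)<1/2$. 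For the reverse inequality I would use the lower bound \eqref{lm1c}, $\tilde\nu_{\a,\g,j}\ge\a^{2}\tilde\lambda_{\g,j}$, together with the $C_{2}$ half of \eqref{gtc2}, to obtain $S_{\ell}(\tau)\ge1+c(\a_\ell\g_\ell)^{2\tau}$ for $\tau\le\tau_{0}$ and show the product diverges once $2\tau<1/\tilde r(\balpha,\bgamma)$, pinning $\tau^{*}$. The isotropic case $\a_\ell\equiv\a$, $\g_\ell\equiv\g$ with $\a\g>0$ is cleanest argued directly: now $\sum_{j}\nu_{d,\balpha,\bgamma,j}^{\tau}=S(\tau)^{d}$, and since $\hK_{\a,\g}$ has at least two positive eigenvalues (all necessarily in $(0,1)$ as they sum to $1$), one has $S(\tau)>S(1)=1$ for $\tau<1$ and $S(\tau)\le1$ for $\tau\ge1$. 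Hence $S(\tau)^{d}$ is bounded in $d$ iff $\tau\ge1$, giving exponent $2$; and because $S(\tau)^{d}$ grows geometrically in $d$ when $\tau<1$, no factor $d^{\,q}$ in \eqref{tractdef} can absorb it, so polynomial and strong polynomial tractability admit exactly the same $\tau$ and therefore coincide.

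The upper bound—and hence tractability itself together with the error rates $\preceq$—falls out almost mechanically from Lemma \ref{lm1}, the trace condition, and the product formula. The step I expect to be genuinely delicate is the matching lower bound that makes the exponent exact: turning the crude operator inequality \eqref{lm1c} and the $C_{2}$ half of \eqref{gtc2} into a uniform lower bound $S_{\ell}(\tau)\ge1+c(\a_\ell\g_\ell)^{2\tau}$, with constants controlled over the full range $0<\g_\ell<\sup_{\ell}\g_\ell$, and then extracting divergence of the resulting infinite product below the threshold, is where the real care lies.
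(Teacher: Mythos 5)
Your proposal is correct and follows essentially the same route as the paper's proof: reduce to the eigenvalue-sum criterion of \cite[Theorem 5.1]{NovWoz08a}, factor the sum over coordinates, sandwich the univariate eigenvalues of $\hK_{\a,\g}$ via Lemma~\ref{lm1} applied to the decomposition $(1-\a^2)\cdot 1+\a^2K_\g$, use the unit-trace identity both for the bound $\tilde\nu_{\a,\g,2}\le C_1(\a\g)^2$ and for the $\tau=1$ cap giving $p^{\rm all}\le 2$, and pin the exponent from below by showing the product over coordinates with small $\a_\ell\g_\ell$ diverges. The minor deviations are benign or to your credit --- extending \eqref{gtc2} to exponents above $1/(2\tilde r(\balpha,\bgamma))$ is a more careful handling of the critical case than the paper's, and your direct geometric-growth argument for the isotropic equivalence of polynomial and strong polynomial tractability replaces the paper's appeal to the polynomial-tractability criterion --- though note that your claimed lower bound $S_\ell(\tau)\ge 1+c(\a_\ell\g_\ell)^{2\tau}$ can only hold for $\tau<1$ (since $S_\ell(1)=1$), which is precisely the restriction the paper encodes through its smallness condition on $\a_\ell\g_\ell$ and which still suffices because divergence is only needed for $\tau<\min\left(1,1/(2\tilde r(\balpha,\bgamma))\right)$.
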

\begin{proof}
From \cite[Theorem 5.1]{NovWoz08a} it follows that $\mathcal I$ is strongly polynomially tractable if and only if there exist two positive numbers $c_1$ and $\tau$ such that
\begin{equation}\label{ptCond}
c_2:=\sup_{d\in\naturals}
\left(\sum_{j=\lceil
    c_1\rceil}^\infty\nu^\tau_{d,\balpha,\bgamma,j}\right)^{1/\tau}<\infty,
\end{equation}
Furthermore, the exponent $p^{\rm all}$ of strong polynomial
tractability is the infimum of $2\tau$ for which this
condition holds. Obviously \eqref{ptCond} holds for $c_1=1$ and $\tau=1$ because
\begin{align*}
\sum_{j=1}^\infty\nu_{d,\balpha,\bgamma,j}&=\prod_{\ell=1}^d\left(\sum_{j=1}^\infty\tilde\nu_{\a_\ell,\g_\ell,j}\right)=\prod_{\ell=1}^d\left(\int_\reals[1-\a_\ell^2+\a_\ell^2K_{\g_\ell}(t,t)]\rho_1(t)dt\right)\\
&=\prod_{\ell=1}^d\left(1-\a_\ell^2+\a_\ell^2\right)=1.
\end{align*}
This shows that $p^{\rm all}\le2$.

Take now $\tilde r(\balpha,\bgamma)>0$. Consider first the case $d=1$ for simplicity. Then kernel $\tK_{d,\balpha,\bgamma}$ in \eqref{GeneralKernel} becomes
$\hK_{\a,\g}.$
We will show that for $\tau=1/(2\tilde r(\balpha,\bgamma))$, the eigenvalues of $\widehat K_{\a,\g}$ satisfy
\begin{equation}\label{evu}
\sum_{j=1}^\infty\tilde\nu_{\a,\g,j}^{\tau}\le 1+C_\text U(\a\g)^{2\tau},
\end{equation}
where the constant $C_\text U$ does not depend $\a$ or $\g$. Since all the eigenvalues of $K_\g$ are non-negative, we clearly have for the first eigenvalue of $K_\g$,
\begin{equation} \label{ev1u}
\tilde\nu_{\a,\g,1}\le1.
\end{equation}
On the other hand, \eqref{gtc1} gives the lower bound of the first eigenvalue of $\widehat K_{\a,\gamma}$
\begin{align}
\tilde\nu_{\a,\g,1}&\ge\int_{\reals^2}\widehat K_{\a,\gamma}(x,t)\rho_1(x)\rho_1(t) \, \D t \D x=\int_{\reals^2}\left(1-\a^2+\a^2 K_\gamma(x,t)\right)\rho_1(x)\rho_1(t) \, \D t \D x \nonumber\\
&=1-\a^2+\a^2\int_{\reals^2}K_\gamma(x,t)\rho_1(x)\rho_1(t) \, \D t \D x \ge1-C_1(\a\gamma)^2.\label{ev1l}
\end{align}
It follows from \eqref{evsum} that
\begin{equation}\label{ev2u}
\tilde\nu_{\a,\g,2}\le C_1(\a\g)^2.
\end{equation}
For $j\ge3$, the upper bound of $\tilde\nu_{\a,\g,j}$ is given by \eqref{lm1b} with $i=1$:
\begin{equation}\label{nll}
\tilde\nu_{\a,\g,j}\le\a^2\tilde\lambda_{\g,j-1},
\end{equation}
which in turn yields
\begin{equation}\label{ev3u}
\sum_{j=3}^\infty\tilde\nu_{\a,\g,j}^{\tau}\le \a^{2\tau} \sum_{j=3}^\infty\tilde\lambda_{\g,j-1}^{\tau}\le C_3(\a\g)^{2\tau}
\end{equation}
by \eqref{gtc2}. Combining \eqref{ev1u}, \eqref{ev2u} and \eqref{ev3u} gives \eqref{evu},
where the constant $C_\text U=C_1^\tau+C_3$.

The lower bound we want to establish is that for $\tau<1/(2\tilde r(\balpha,\bgamma))$,
\begin{equation}\label{evl}
\sum_{j=1}^\infty\tilde\nu_{\a,\g,j}^\tau\ge 1+C_\text L(\a\g)^{2\tau} \quad \text{if} \quad \a\g < \left(\frac{C_2}{2C_1}\right)^{1/[2(1-\tau)]},
\end{equation}
where $C_\text L:=C_2/2$.
It follows from \eqref{ev1l} that
\begin{equation}\label{ev1tl}
\tilde\nu_{\a,\g,1}^\tau\ge\tilde\nu_{\a,\g,1}\ge1-C_1(\a\gamma)^2.
\end{equation}
In addition we apply the eigenvalue inequality \eqref{lm1b} to obtain
\begin{equation}
\tilde\nu_{\a,\g,j}\ge\a^2\tilde\lambda_{\g,j},\ \ \ j=2,3,\ldots\nonumber
\end{equation}
which in turn gives
\begin{equation}\label{ev2tl}
\sum_{j=2}^\infty\tilde\nu_{\a,\g,j}^\tau\ge\a^{2\tau}\sum_{j=2}^\infty\tilde\lambda_{\g,j}^\tau\ge C_2(\a\g)^{2\tau},
\end{equation}
where the last inequality follows from \eqref{gtc2}. Inequalities \eqref{ev1tl} and \eqref{ev2tl} together give
$$\sum_{j=1}^\infty\tilde\nu_{\a,\g,j}^\tau\ge1-C_1(\a\gamma)^2+C_2(\a\g)^{2\tau} \ge 1+(C_2/2)(\a\g)^{2\tau} $$
under the condition in \eqref{evl} on small enough $\a\g$.  Thus we obtain \eqref{evl}.

For the multivariate case, the sum of the $\tau$-th power of the eigenvalues is bounded from above for $\tau=1/(2\tilde r(\balpha,\bgamma))$ because
\begin{align}
\sum_{j=1}^\infty\nu_{d,\balpha,\bgamma,j}^{\tau}&=\prod_{\ell=1}^d\left(\sum_{j=1}^\infty\tilde\nu_{\a_\ell,\g_\ell,j}^{\tau}\right)\le\prod_{\ell=1}^\infty\left(1+C_\text U(\a_\ell\gamma_\ell)^{2\tau}\right)\nonumber\\
&=\exp\left(\sum_{\ell=1}^\infty\ln\left(1+C_\text U(\a_\ell\gamma_\ell)^{2\tau}\right)\right)\le\exp\left(C_\text U\sum_{\ell=1}^\infty (\a_\ell\gamma_\ell)^{2\tau}\right)<\infty.\label{evtu}
\end{align}
This shows that
$p^{\rm all}\le1/\tilde r(\balpha,\bgamma).$

We now consider the lower bound in the multivariate case and define the set $A$ by
$$A=\left\{\ell\left| \a_\ell \g_\ell  < \left(\frac{C_2}{2C_1}\right)^{1/[2(1-\tau)]} \right.\right\}.$$
Then
\begin{align*}
\sup_{d\in\naturals}\left(\sum_{j=1}^\infty\nu^\tau_{d,\balpha,\bgamma,j}\right)&=\prod_{\ell=1}^\infty\left(\sum_{j=1}^\infty\tilde\nu_{\a_\ell,\g_\ell,j}^{\tau}\right)=\prod_{\ell\in A}\left(\sum_{j=1}^\infty\tilde\nu_{\a_\ell,\g_\ell,j}^{\tau}\right)\prod_{\ell\in\naturals\setminus A}\left(\sum_{j=1}^\infty\tilde\nu_{\a_\ell,\g_\ell,j}^{\tau}\right).
\end{align*}
We want to show that this supremum is infinite for $\tau<1/(2\tilde r(\balpha,\bgamma))$.  We do this by proving that the first product on the right is infinite. Indeed for $\tau<1/(2\tilde r(\balpha,\bgamma))$,
\begin{align*}
\prod_{\ell\in A}\left(\sum_{j=1}^\infty\tilde\nu_{\a_\ell,\g_\ell,j}^{\tau}\right)&\ge\prod_{\ell\in A}\left[1+C_\text L(\a_\ell\g_\ell)^{2\tau}\right]\ge1+C_\text L\sum_{\ell\in A}(\a_\ell\g_\ell)^{2\tau} = \infty.
\end{align*}
Therefore,
$p^{\rm all}\ge1/\tilde r(\balpha,\bgamma)$, which establishes the formula for $p^{\rm all}$. The estimates on $e^{\text{wor-all}}(n,\ch_d)$ and $n^{\text{wor-abs-all}}(\e,\ch_d)$ follow from the definition of strong tractability.

Finally, the exponent of strong tractability is 2 for the isotropic kernel because $\tilde r(\balpha,\bgamma)=0$ in this case. To prove that strong polynomial tractability is equivalent
to polynomial tractability, it is enough to show that
polynomial tractability implies strong polynomial tractability.
From \cite[Theorem 5.1]{NovWoz08a} we know that
polynomial tractability holds if and only if
there exist numbers $c_1>0$, $q_1\ge0$, $q_2\ge0$ and $\tau>0$
such that
$$
c_2:=\sup_{d\in\naturals}\left\{d^{-q_2}
\left(\sum_{j=\lceil
    C_1\,d^{\,q_1}\rceil}^\infty\lambda^\tau_{d,j}\right)^{1/\tau}
\right\}<\infty.
$$
If so, then
$$
n^{\text{wor-abs-all}}(\e,\ch_d)\le(c_1+c_2^\tau)\,d^{\,\max(q_1,q_2\tau)}\,
\e^{-2\tau}
$$
for all $\e\in(0,1)$ and $d\in\naturals$. Note that for all $d$ we
have
\begin{align*}
d^{-q_2\tau}\left(\sum_{j=1}^\infty\tilde\nu_{\a,\g,j}^\tau\right)^d-d^{-q_2\tau}(\lceil c_1\rceil-1)\tilde\nu_{\a,\g,1}^{\tau d}\le c_2^\tau<\infty.
\end{align*}
This implies that $\tau\ge1$. On the other hand, for $\tau=1$ we can
take $q_1=q_2=0$ and arbitrarily small $C_1$, and
obtain strong tractability.
This completes the proof.
\qed
\end{proof}

Theorem~\ref{ThmAbsAll} states that the exponent of strong polynomial tractability is at most $2$, while for all shape parameters for which $\tilde r(\bgamma)>1/2$ the exponent is smaller than $2$. Again, although the rate of convergence of
$e^{\text{wor-all}}(n,\ch_d)$ is always excellent, the
dependence on $d$ is eliminated only at the expense of the exponent
which must be roughly  $1/p^{\rm all}$.  Of course, if we take an
exponentially decaying sequence of the products of scale parameters and shape parameters, say,
$\a_\ell\gamma_\ell=q^{\,\ell}$ for some
$q\in(0,1)$, then $\tilde r(\bgamma)=\infty$ and $p^{\rm all}=0$. In this
case, we have an excellent rate of convergence without any dependence on $d$.

\subsection{Only Function Values}

The tractability results for the class $\Lambda^{\text{std}}$ are stated in the following theorem.
\begin{theorem}\label{ThmAbsStd}
Consider the function approximation problem $\mathcal{I}=\{I_d\}_{d \in \naturals}$
for Hilbert spaces for the class $\lstd$ and
the absolute error criterion with the kernels \eqref{GeneralKernel} satisfying \eqref{evsum}. Let $\tilde r(\balpha,\bgamma)$ be given by \eqref{tilder}.
If there exist constants $C_1,C_2,C_3>0$, which are independent of $\gamma$ but may depend on $\tilde r(\balpha,\bgamma)$ and $\sup\{\g_\ell|\ell\in\naturals\}$, such that \eqref{gtc1} and \eqref{gtc2} are satisfied for all $0 < \gamma < \sup\{\g_\ell|\ell\in\naturals\}$, then
\begin{itemize}
\item $\mathcal{I}$
is strongly polynomially tractable
with exponent of strong polynomial tractability at most $4$.
For all $d\in\naturals$ and $\e\in(0,1)$ we have
\begin{eqnarray*}
e^{\text{\rm wor-std}}(n,\ch_d)&\le&\frac{\sqrt{2}}{n^{1/4}}
\,\left(1+\frac1{2\sqrt{n}}\right)^{1/2},\\
n^{\rm wor-abs-std}(\e,\ch_d)
&\le& \left\lceil\frac{(1+\sqrt{1+\e^2})^2}{\e^{4}}\right\rceil.
\end{eqnarray*}
\item
For the isotropic kernel with $\a_\ell=\a$ and $\g_\ell=\g$ for all $\ell$, the exponent of
strong tractability is at least $2$ and strong polynomial tractability is equivalent to polynomial tractability.
\end{itemize}
Furthermore if $\tilde r(\balpha,\bgamma)>1/2$, then
\begin{itemize}
\item $\mathcal{I}$
is strongly polynomially tractable
with exponent of strong polynomial tractability at most
$$p^\text{\rm std}=\frac1{\tilde r(\balpha,\bgamma)}+\frac1{2\tilde r^2(\balpha,\bgamma)}=p^\text{\rm all}+\frac12(p^\text{\rm all})^2<4.$$
\item For all $d\in\naturals$ we have
\begin{eqnarray*}
e^{\text{\rm wor-std}}(n,\ch_d)&\preceq &n^{-1/p^{\rm std}} = n^{-\tilde r(\balpha,\bgamma)/[1+1/(2\tilde r(\balpha,\bgamma))]} \quad n \to \infty,\\
n^{\text{\rm wor-abs-std}}(\e,\ch_d)& \preceq &
\e^{-p^{\rm std}} \quad \e \to 0.
\end{eqnarray*}
\end{itemize}
\end{theorem}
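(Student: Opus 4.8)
The plan is to reduce everything to the eigenvalues $\nu_{d,\balpha,\bgamma,j}$ of the operator $W$, exactly as in the proof of Theorem~\ref{ThmAbsAll}, and then to feed these eigenvalue estimates into a standard-information bound of the type developed in \cite{NovWoz08a}. The starting point is the trace identity already established there, $\sum_{j=1}^\infty\nu_{d,\balpha,\bgamma,j}=1$ for every $d$, together with the elementary consequence $\nu_{d,\balpha,\bgamma,m+1}\le 1/(m+1)$, which holds because the eigenvalues are weakly decreasing and sum to one. Since the problem for $\lstd$ is no easier than for $\lall$, I will also use the inclusion-based comparison $e^{\text{wor-std}}(n,\ch_d)\ge e^{\text{wor-all}}(n,\ch_d)$ whenever a lower bound on the exponent is needed.

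For the unconditional (exponent-$4$) statement I would invoke the standard-information inequality
\begin{equation*}
[e^{\text{wor-std}}(n,\ch_d)]^2\le \nu_{d,\balpha,\bgamma,m+1}+\frac{m}{n}\sum_{j=1}^\infty\nu_{d,\balpha,\bgamma,j},\qquad 0\le m\le n,
\end{equation*}
which bounds the worst case error of an algorithm reconstructing the top $m$ eigendirections from $n$ function values: the first term is the truncation error and the second is the sampling cost. Using the trace identity and $\nu_{d,\balpha,\bgamma,m+1}\le 1/(m+1)$, the right-hand side becomes $1/(m+1)+m/n$, and the integer choice $m=\lceil\sqrt n\,\rceil$ yields $[e^{\text{wor-std}}(n,\ch_d)]^2\le 2/\sqrt n+1/n$. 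This is exactly the claimed bound $e^{\text{wor-std}}(n,\ch_d)\le (\sqrt2/n^{1/4})(1+1/(2\sqrt n))^{1/2}$, and inverting $2/\sqrt n+1/n=\e^2$ (a quadratic in $1/\sqrt n$) reproduces the stated closed form for $n^{\text{wor-abs-std}}(\e,\ch_d)$ and hence strong polynomial tractability with exponent at most $4$. This part is completely dimension-free because only the trace, normalized to one by \eqref{evsum}, enters.

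For the refined exponent when $\tilde r(\balpha,\bgamma)>1/2$ I would combine the eigenvalue decay coming out of Theorem~\ref{ThmAbsAll} with the sharp standard-versus-linear conversion of \cite{NovWoz08a}. Theorem~\ref{ThmAbsAll} gives $p^{\rm all}=1/\tilde r(\balpha,\bgamma)<2$, equivalently $\sup_d\sum_j\nu_{d,\balpha,\bgamma,j}^\tau<\infty$ for every $\tau>p^{\rm all}/2$; since the eigenvalues are ordered, $m\,\nu_{d,\balpha,\bgamma,m}^\tau\le\sum_{j\le m}\nu_{d,\balpha,\bgamma,j}^\tau$ forces the uniform decay $\nu_{d,\balpha,\bgamma,j}\preceq j^{-1/\tau}$. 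Feeding this decay into the conversion result, which trades a linear-information exponent $p^{\rm all}$ for the standard-information exponent $p^{\rm std}=p^{\rm all}+\tfrac12(p^{\rm all})^2$, gives strong polynomial tractability with exponent at most $p^{\rm std}$ and the rate $e^{\text{wor-std}}(n,\ch_d)\preceq n^{-1/p^{\rm std}}$. The crude truncation bound of the previous paragraph only delivers the weaker exponent $p^{\rm all}+2$ (which agrees with $p^{\rm std}$ precisely when $p^{\rm all}=2$), so the real work — and the main obstacle — is the sharper conversion: choosing the truncation level as a function of both $n$ and the decay rate and balancing truncation error against sampling cost so that the exponent loss is only $\tfrac12(p^{\rm all})^2$ rather than $2$.

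Finally, for the isotropic kernel $\a_\ell=\a$, $\g_\ell=\g$ one has $\tilde r(\balpha,\bgamma)=0$, so the refined branch is vacuous and only the exponent-$4$ bound applies from above. The matching lower bound follows from $e^{\text{wor-std}}(n,\ch_d)\ge e^{\text{wor-all}}(n,\ch_d)$ together with the isotropic part of Theorem~\ref{ThmAbsAll}, whose $\lall$-exponent is $2$; hence the $\lstd$-exponent is at least $2$. The equivalence of polynomial and strong polynomial tractability is then argued exactly as in the last paragraph of the proof of Theorem~\ref{ThmAbsAll}: polynomial tractability forces the critical power $\tau\ge 1$ through the product structure $\sum_j\nu_{d,\balpha,\bgamma,j}^\tau=\prod_\ell(\sum_j\tilde\nu_{\a,\g,j}^\tau)$, and $\tau=1$ already yields strong polynomial tractability.
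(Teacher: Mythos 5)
Your proposal is correct and follows the same overall strategy as the paper: both treat the $\lstd$ results as an application of the known conversion machinery (the truncation bound for the unconditional exponent-$4$ statement, and the sharper standard-vs-linear conversion of Kuo--Wasilkowski--Wo\'zniakowski for the refined exponent $p^{\rm all}+\tfrac12(p^{\rm all})^2$), so that the only new mathematical content is verifying the hypothesis of that conversion theorem, namely a dimension-independent polynomial decay $\nu_{d,\balpha,\bgamma,n}\le B n^{-p}$ with $p>1$, which is exactly the paper's condition \eqref{a3p}. Where you differ is in how you verify it: the paper works through the univariate eigenvalues, combining \eqref{gtc2} with the ordering of the $\tilde\lambda_{\g_\ell,j}$ to get $\tilde\lambda_{\g_\ell,n}\preceq n^{-2\tilde r(\balpha,\bgamma)}$, then lifting to $\tilde\nu_{\a_\ell,\g_\ell,n}$ via \eqref{nll} and to the multivariate eigenvalues via $\nu_{d,\balpha,\bgamma,n}\le\tilde\nu_{\a_\ell,\g_\ell,n}$; you instead apply the Chebyshev-type bound $m\,\nu_{d,\balpha,\bgamma,m}^\tau\le\sum_{j\le m}\nu_{d,\balpha,\bgamma,j}^\tau$ directly to the multivariate eigenvalues, reusing the uniform bound \eqref{evtu} already established in Theorem~\ref{ThmAbsAll}. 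Your route is slightly more economical (it avoids re-deriving univariate estimates and gets the $d$-uniformity of $B$ for free from \eqref{evtu}), while the paper's route makes the dependence of the constant on $C_3$, $\a_\ell$, $\g_\ell$ explicit. Your reconstruction of the exponent-$4$ branch from the trace identity \eqref{evsum} with $m=\lceil\sqrt n\,\rceil$, and of the isotropic lower bound via $e^{\text{wor-std}}\ge e^{\text{wor-all}}$, matches what the paper imports from its references.
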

\begin{proof}
The same proofs as for \cite[Theorem 5.3 and 5.4]{FasHicWoz12a} can be used. We only need to show that the assumption of \cite[Theorem 5]{KuoWasWoz09a}, which is used in \cite[Theorem 5.4]{FasHicWoz12a}, is satisfied. It is enough to show that there exists $p>1$ and $B>0$ such that for any $n\in\naturals$,
\begin{equation}\label{a3p}
\nu_{d,\balpha,\bgamma,n}\le\frac{B}{n^p}.
\end{equation}
Take $\tau=1/(2\tilde r(\balpha,\bgamma))$. Since the eigenvalues $\tilde\lambda_{\g_\ell,n}$ are ordered, we have for $n\ge2$,
\begin{equation*}
\tilde\lambda_{\gamma_\ell,n}^{\tau}\le\frac{1}{n-1}\sum_{j=2}^n\tilde\lambda_{\gamma_\ell,j}^{\tau}\le\frac{1}{n-1}\sum_{j=2}^\infty\tilde\lambda_{\gamma_\ell,j}^{\tau}\le\frac{C_3\g_\ell^{2\tau}}{n-1},
\end{equation*}
where the last inequality follows from \eqref{gtc2}. Raising to the power $1/\tau_0$ gives
\begin{equation*}
\tilde\lambda_{\gamma_\ell,n}\le\g_\ell^2\left(\frac{C_3}{n-1}\right)^{1/\tau}.
\end{equation*}
Furthermore \eqref{nll} implies that for $n\ge3$,
\begin{align*}
\tilde\nu_{\a_\ell,\gamma_\ell,n}&\le\a_\ell^2\tilde\lambda_{\gamma_\ell,n-1}\le\a_\ell^2\g_\ell^2\left(\frac{C_3}{n-2}\right)^{1/\tau}=\a_\ell^2\g_\ell^2C_3^{1/\tau}\left(\frac n{n-2}\right)^{1/\tau}\left(\frac1{n^{1/\tau}}\right)\\
&\le\frac{\a_\ell^2\g_\ell^2(3C_3)^{1/\tau}}{n^{1/\tau}}.
\end{align*}
Since $\tilde\nu_{\a_\ell,\g_\ell,n}\le1$ for all $n\in\naturals$, we have that for all $1\le\ell\le d$ and $n\ge3$,
\begin{equation*}
\nu_{d,\balpha,\bgamma,n}\le\tilde\nu_{\a_\ell,\g_\ell,n}\le\frac{C_4}{n^p},
\end{equation*}
where $C_4=\a_\ell^2\g_\ell^2(3C_3)^{1/\tau}$ and $p=1/\tau>1$. For $n=1$ and $n=2$, we can always find $C_5$ large enough such that
$\nu_{d,\balpha,\bgamma,n}\le{C_5}/{n^p}.$
Therefore \eqref{a3p} holds for $B=\max\{C_4,C_5\}$.
\qed
\end{proof}

Note that \eqref{a3p} can be easily satisfied for many kernels used in practice. This theorem implies that for large $\tilde r(\balpha,\bgamma)$, the exponents of strong polynomial
tractability are nearly the same for both classes $\lall$ and $\lstd$.
For an exponentially decaying sequence of shape parameters, say,
$\a_\ell\gamma_\ell=q^{\,\ell}$
for some $q\in(0,1)$, we have $p^{\rm all}=p^{\rm std}=0$, and
the rates of convergence are excellent and independent of $d$.

\section{Tractability for the Normalized Error Criterion}\label{secNor}
We now consider the function approximation problem for Hilbert spaces
$\ch_d(\tK_d)$ with a general kernel for the normalized error
criterion. That is, we want to find the smallest $n$ for which
$$
e^{\text{wor-$\stdall$}}(n,\ch_d)\le \e\,\|I_d\|,
\qquad \stdall \in \{{\rm std},{\rm all}\}.
$$
Note that $\|I_d\|=\sqrt{\nu_{d,\balpha,\bgamma,1}}\le1$ and it can be
exponentially small in $d$. Therefore
the normalized error criterion may be much harder
than the absolute error criterion. It follows from \cite[Theorem 6.1]{FasHicWoz12a} that for the normalized error criterion, lack of polynomial tractability holds for the isotropic kernel for the class $\Lambda^{\text{all}}$ and hence for the class $\Lambda^{\text{std}}$.
\subsection{Arbitrary Linear Functionals}

We do not know polynomial tractability holds for kernels with $0<\tilde r(\balpha,\bgamma)<1/2$. For $\tilde r(\balpha,\bgamma)\ge1/2$, we have the following theorem.
\begin{theorem}\label{ThmNorAll}
Consider the function approximation problem $\mathcal{I}=\{I_d\}_{d \in \naturals}$
for Hilbert spaces for the class $\lstd$ and
the relative error criterion with the kernels \eqref{GeneralKernel} satisfying \eqref{evsum}. Let $\tilde r(\balpha,\bgamma)$ be given by \eqref{tilder} and $\tilde r(\balpha,\bgamma)\ge1/2$.
If there exist constants $C_1,C_2,C_3>0$, which are independent of $\gamma$ but may depend on $\tilde r(\balpha,\bgamma)$ and $\sup\{\g_\ell|\ell\in\naturals\}$, such that \eqref{gtc1} and \eqref{gtc2} are satisfied for all $0 < \gamma < \sup\{\g_\ell|\ell\in\naturals\}$,  then
\begin{itemize}
\item
$\mathcal{I}$
is strongly polynomially tractable with exponent of strong polynomial tractability
$$
p^{\rm all}=\frac1{\tilde r(\balpha,\bgamma)}.
$$
\item
For all $d\in\naturals$ we have
\begin{eqnarray*}
e^{\text{\rm wor-all}}(n,\ch_d)&\preceq &\|I_d\|n^{-1/p^{\rm all}} = n^{-\tilde r(\bgamma)} \quad n \to \infty,\\
n^{\text{\rm wor-abs-all}}(\e,\ch_d)& \preceq &
\e^{-p^{\rm all}} \quad \e \to 0.
\end{eqnarray*}
\end{itemize}
\end{theorem}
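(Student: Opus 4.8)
The plan is to reduce strong polynomial tractability for the normalized criterion to a uniform bound on a sum of \emph{normalized} eigenvalues, exactly as the absolute case reduced to \eqref{ptCond}, and then to reuse the one-dimensional estimates \eqref{evu}, \eqref{evl}, and \eqref{ev1l} already established in the proof of Theorem~\ref{ThmAbsAll}. By the analogue of \cite[Theorem 5.1]{NovWoz08a} for the normalized error criterion and the class $\lall$, the problem $\mathcal I$ is strongly polynomially tractable if and only if there is a $\tau>0$ for which
\begin{equation*}
\sup_{d\in\naturals}\sum_{j=1}^\infty\left(\frac{\nu_{d,\balpha,\bgamma,j}}{\nu_{d,\balpha,\bgamma,1}}\right)^{\tau}<\infty,
\end{equation*}
and then $p^{\rm all}$ is the infimum of $2\tau$ over all admissible $\tau$. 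Because $\tK_d$ is a tensor product, the largest eigenvalue factors as $\nu_{d,\balpha,\bgamma,1}=\prod_{\ell=1}^d\tilde\nu_{\a_\ell,\g_\ell,1}$, and combining this with \cite[Lemma 3.1]{FasHicWoz12a} turns the normalized sum into a product over coordinates,
\begin{equation*}
\sum_{j=1}^\infty\left(\frac{\nu_{d,\balpha,\bgamma,j}}{\nu_{d,\balpha,\bgamma,1}}\right)^{\tau}=\prod_{\ell=1}^d\frac{\sum_{j=1}^\infty\tilde\nu_{\a_\ell,\g_\ell,j}^{\tau}}{\tilde\nu_{\a_\ell,\g_\ell,1}^{\tau}}.
\end{equation*}
Everything then hinges on estimating the one-dimensional ratio $R_{\a,\g}(\tau):=\big(\sum_j\tilde\nu_{\a,\g,j}^\tau\big)/\tilde\nu_{\a,\g,1}^\tau$ from above and below.

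For the upper bound $p^{\rm all}\le1/\tilde r(\balpha,\bgamma)$ I would fix $\tau=1/(2\tilde r(\balpha,\bgamma))$ and bound the numerator by \eqref{evu} and the denominator by \eqref{ev1l}, giving
\begin{equation*}
R_{\a,\g}(\tau)\le\frac{1+C_\text{U}(\a\g)^{2\tau}}{\left(1-C_1(\a\g)^2\right)^{\tau}}.
\end{equation*}
The decisive point---and where the hypothesis $\tilde r(\balpha,\bgamma)\ge1/2$ enters---is that $\tau\le1$, so $2\tau\le2$ and hence $(\a\g)^2\le(\a\g)^{2\tau}$ once $\a\g\le1$; this lets the factor coming from the denominator be absorbed, yielding $R_{\a,\g}(\tau)\le1+C(\a\g)^{2\tau}$ for all sufficiently small $\a\g$. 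Taking the product over $\ell$ and using $\ln(1+x)\le x$ gives $\prod_\ell R_{\a_\ell,\g_\ell}(\tau)\le\exp\big(C\sum_\ell(\a_\ell\g_\ell)^{1/\tilde r(\balpha,\bgamma)}\big)<\infty$, since $\sum_\ell(\a_\ell\g_\ell)^{1/\tilde r(\balpha,\bgamma)}$ converges by the definition \eqref{tilder} of $\tilde r(\balpha,\bgamma)$. Thus the tractability condition holds and $p^{\rm all}\le2\tau=1/\tilde r(\balpha,\bgamma)$.

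For the matching lower bound $p^{\rm all}\ge1/\tilde r(\balpha,\bgamma)$ I would take any $\tau<1/(2\tilde r(\balpha,\bgamma))$ and show the product diverges. Here the normalized criterion is in fact more forgiving than the absolute one: since $\tilde\nu_{\a,\g,1}\le1$ we have $\tilde\nu_{\a,\g,1}^\tau\le1$, so $R_{\a,\g}(\tau)\ge\sum_j\tilde\nu_{\a,\g,j}^\tau\ge1+C_\text{L}(\a\g)^{2\tau}$ by \eqref{evl}, valid for all coordinates with $\a_\ell\g_\ell$ small---that is, for all $\ell$ past some index, as $\tilde r(\balpha,\bgamma)>0$ forces $\a_\ell\g_\ell\to0$. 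Restricting the product to these coordinates and using $\prod(1+x_\ell)\ge1+\sum x_\ell$ gives $\prod_\ell R_{\a_\ell,\g_\ell}(\tau)\ge1+C_\text{L}\sum_\ell(\a_\ell\g_\ell)^{2\tau}=\infty$, because $2\tau<1/\tilde r(\balpha,\bgamma)$ forces $\sum_\ell(\a_\ell\g_\ell)^{2\tau}$ to diverge by \eqref{tilder}. Hence no $\tau<1/(2\tilde r(\balpha,\bgamma))$ is admissible, which pins down $p^{\rm all}=1/\tilde r(\balpha,\bgamma)$.

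Finally, the error estimates follow from the eigenvalue picture for $\lall$: the $n$th minimal worst-case error equals $\sqrt{\nu_{d,\balpha,\bgamma,n+1}}$, while the initial error is $\|I_d\|=\sqrt{\nu_{d,\balpha,\bgamma,1}}$. From the uniform bound $\sum_j(\nu_{d,\balpha,\bgamma,j}/\nu_{d,\balpha,\bgamma,1})^\tau\le M$ with $\tau=1/(2\tilde r(\balpha,\bgamma))$, the elementary tail estimate $n\,(\nu_{d,\balpha,\bgamma,n+1}/\nu_{d,\balpha,\bgamma,1})^\tau\le M$ gives $\sqrt{\nu_{d,\balpha,\bgamma,n+1}}\le\|I_d\|\,(M/n)^{1/(2\tau)}$, i.e. $e^{\text{\rm wor-all}}(n,\ch_d)\preceq\|I_d\|\,n^{-\tilde r(\balpha,\bgamma)}=\|I_d\|\,n^{-1/p^{\rm all}}$; inverting this relation yields $n^{\text{\rm wor-abs-all}}(\e,\ch_d)\preceq\e^{-p^{\rm all}}$. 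I expect the main obstacle to be the upper-bound ratio estimate: controlling $\tilde\nu_{\a,\g,1}^\tau$ from below and showing that the $(\a\g)^2$ contribution of the denominator is dominated by the $(\a\g)^{2\tau}$ term of the numerator, which is precisely where the restriction $\tilde r(\balpha,\bgamma)\ge1/2$ is unavoidable and below which, as the authors note, tractability is left open.
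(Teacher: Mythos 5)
Your proposal is correct and follows essentially the same route as the paper: both reduce to the Novak--Wo\'zniakowski eigenvalue criterion for the normalized error, bound the numerator via \eqref{evtu} at $\tau=1/(2\tilde r(\balpha,\bgamma))$, and control $\nu_{d,\balpha,\bgamma,1}$ from below via \eqref{ev1l} together with $\tilde r(\balpha,\bgamma)\ge1/2$. The only differences are presentational: you absorb the denominator coordinate-by-coordinate using $\tau\le1$ rather than bounding $\sup_d 1/\nu_{d,\balpha,\bgamma,1}$ by a single convergent product, and you spell out the matching lower bound $p^{\rm all}\ge1/\tilde r(\balpha,\bgamma)$ (via $\nu_{d,\balpha,\bgamma,1}\le1$), which the paper leaves implicit.
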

\begin{proof} From
\cite[Theorem 5.2]{NovWoz08a} we know that strong polynomial tractability
holds if and only if there exits a positive number $\tau$ such that
$$
c_2:=\sup_{d}
\sum_{j=1}^\infty
\left(\frac{\nu_{d,\balpha,\bgamma,j}}{\nu_{d,\balpha,\bgamma,1}}\right)^\tau=\sup_{d}\left\{\frac1{\nu_{d,\balpha,\bgamma,1}^\tau}\sum_{j=1}^\infty\nu_{d,\balpha,\bgamma,j}^\tau\right\}<\infty.
$$
If so, then $n^{\text{wor-nor-all}}(\e,\ch_d)\le c_2\,\e^{-2\tau}$
 for all $\e\in(0,1)$ and $d\in\naturals$,
and the exponent of strong polynomial tractability is the infimum of $2\tau$
for which $c_2<\infty$.

For all $d\in\naturals$, we have $\sum_{j=1}^\infty\nu_{d,\balpha,\bgamma,j}^\tau<\infty$ for $\tau=1/(2\tilde r(\balpha,\bgamma))$ from $\eqref{evtu}$. It remains to note that $\sup_d\{1/\nu_{d,\balpha,\bgamma,1}^\tau\}<\infty$ if and only if $\sup_d\{1/\nu_{d,\balpha,\bgamma,1}\}<\infty$. Furthermore note that \eqref{ev1l} implies that
$$\sup_d\left\{\frac1{\nu_{d,\balpha,\bgamma,1}}\right\}\le\prod_{\ell=1}^\infty\frac1{1-C_1(\a_\ell\g_\ell)^{2}}.$$
Clearly, $\tilde r(\balpha,\bgamma)\ge1/2$ implies that $\sum_{\ell=1}^\infty(\a_\ell\g_\ell)^{2}<\infty$, which yields $c_2<\infty$.
This also proves that
$p^{\rm all}\le1/\tilde r(\balpha,\bgamma)$.
The estimates on $e^{\text{wor-all}}(n,\ch_d)$
and $n^{\text{wor-nor-all}}(\e,\ch_d)$ follow from the definition of strong
tractability.
\qed
\end{proof}

\subsection{Only Function Values}

We now turn to the class $\lstd$. We do not know if polynomial tractability holds for the class $\lstd$ for $0<\tilde r(\balpha,\bgamma)\le1/2$. For $\tilde r(\balpha,\bgamma)>1/2$, we have the following theorem.
\begin{theorem}\label{ThmNorStd}
Consider the function approximation problem $\mathcal{I}=\{I_d\}_{d \in \naturals}$
for Hilbert spaces with the kernel \eqref{GeneralKernel} for the class $\lstd$ and
the normalized error criterion. Let $\tilde r(\balpha,\bgamma)$ be given by \eqref{tilder} and $\tilde r(\balpha,\bgamma)>1/2$.
If there exist constants $C_1,C_2,C_3>0$, which are independent of $\gamma$ but may depend on $\tilde r(\balpha,\bgamma)$ and $\sup\{\g_\ell|\ell\in\naturals\}$, such that \eqref{gtc1} and \eqref{gtc2} are satisfied for all $0 < \gamma < \sup\{\g_\ell|\ell\in\naturals\}$, then
\begin{itemize}
\item $\mathcal{I}$
is strongly polynomially tractable
with exponent of strong polynomial tractability at most
$$p^\text{\rm std}=\frac1{\tilde r(\balpha,\bgamma)}+\frac1{2\tilde r^2(\balpha,\bgamma)}=p^\text{\rm all}+\frac12(p^\text{\rm all})^2<4.$$
For all $d\in\naturals$ we have
\begin{eqnarray*}
e^{\text{\rm wor-std}}(n,\ch_d)&\preceq &n^{-1/p^{\rm std}} \quad n \to \infty,\\
n^{\text{\rm wor-nor-std}}(\e,\ch_d)& \preceq &
\e^{-p^{\rm std}} \quad \e \to 0.
\end{eqnarray*}
\end{itemize}
\end{theorem}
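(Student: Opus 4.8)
The plan is to reduce the normalized-error problem to the absolute-error problem already settled in Theorem~\ref{ThmAbsStd}. The two criteria differ only through the initial error $\|I_d\|=\sqrt{\nu_{d,\balpha,\bgamma,1}}$, so the entire difficulty lies in showing that, under the hypothesis $\tilde r(\balpha,\bgamma)>1/2$, this factor is bounded away from zero uniformly in~$d$. This is precisely where the strict inequality $\tilde r(\balpha,\bgamma)>1/2$ enters, and I expect it to be the only real obstacle.

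First I would record that $\tilde r(\balpha,\bgamma)>1/2$ forces $\sum_{\ell=1}^\infty(\a_\ell\g_\ell)^2<\infty$. Choosing $\beta$ with $1/2<\beta<\tilde r(\balpha,\bgamma)$ in \eqref{tilder} gives $\sum_{\ell=1}^\infty(\a_\ell\g_\ell)^{1/\beta}<\infty$ with exponent $1/\beta<2$; hence $\a_\ell\g_\ell\to0$ and $(\a_\ell\g_\ell)^2\le(\a_\ell\g_\ell)^{1/\beta}$ for all but finitely many~$\ell$. Since the first eigenvalue of a tensor-product operator factors as $\nu_{d,\balpha,\bgamma,1}=\prod_{\ell=1}^d\tilde\nu_{\a_\ell,\g_\ell,1}$, and \eqref{ev1l} supplies $\tilde\nu_{\a_\ell,\g_\ell,1}\ge1-C_1(\a_\ell\g_\ell)^2$, the summability of $(\a_\ell\g_\ell)^2$ makes the infinite product $\prod_{\ell}\bigl(1-C_1(\a_\ell\g_\ell)^2\bigr)$ converge to a strictly positive limit. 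Absorbing the finitely many early factors, where the lower bound may fail to be positive, into a fixed positive constant yields
$$m:=\inf_{d\in\naturals}\nu_{d,\balpha,\bgamma,1}>0,$$
so that $\|I_d\|\ge\sqrt{m}>0$ for every $d$, exactly as in the proof of Theorem~\ref{ThmNorAll}.

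With this uniform lower bound the two criteria coincide up to a dimension-independent constant. The worst-case error $e^{\text{wor-std}}(n,\ch_d)$ is one and the same quantity in both settings; only the target threshold changes, from $\e$ to $\e\|I_d\|$. Because $\e\|I_d\|\ge\e\sqrt{m}$, any $n$ achieving $e^{\text{wor-std}}(n,\ch_d)\le\e\sqrt m$ a fortiori meets the normalized threshold, so
$$n^{\text{wor-nor-std}}(\e,\ch_d)\le n^{\text{wor-abs-std}}(\e\sqrt{m},\ch_d).$$
The hypotheses \eqref{gtc1}--\eqref{gtc2} together with $\tilde r(\balpha,\bgamma)>1/2$ are exactly those of the second part of Theorem~\ref{ThmAbsStd}, which gives $n^{\text{wor-abs-std}}(\e\sqrt m,\ch_d)\preceq(\e\sqrt m)^{-p^{\rm std}}\preceq\e^{-p^{\rm std}}$, the constant $m^{-p^{\rm std}/2}$ being independent of $d$. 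This establishes strong polynomial tractability with exponent at most $p^{\rm std}=p^{\rm all}+\frac12(p^{\rm all})^2<4$, while the rate $e^{\text{wor-std}}(n,\ch_d)\preceq n^{-1/p^{\rm std}}$ transfers verbatim from Theorem~\ref{ThmAbsStd} since the error itself is criterion-independent.

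Everything after the positivity of $m$ is bookkeeping, so the crux is genuinely the uniform bound $\inf_d\nu_{d,\balpha,\bgamma,1}>0$. It is worth emphasizing that this is where $\tilde r(\balpha,\bgamma)>1/2$ is indispensable: for $0<\tilde r(\balpha,\bgamma)\le1/2$ the series $\sum_\ell(\a_\ell\g_\ell)^2$ may diverge, the first eigenvalue can then decay to zero as $d\to\infty$, and the reduction collapses, which is exactly why tractability is left open in that regime.
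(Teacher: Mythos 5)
Your proposal is correct and follows essentially the same route as the paper: establish that $\tilde r(\balpha,\bgamma)>1/2$ forces $\sum_\ell(\a_\ell\g_\ell)^2<\infty$, hence $\|I_d\|$ is uniformly bounded below by a positive constant via the product lower bound from \eqref{ev1l}, and then reduce the normalized criterion to the absolute criterion of Theorem~\ref{ThmAbsStd} with $\e$ replaced by $\e\|I_d\|=\Theta(\e)$. Your write-up actually fills in details the paper leaves implicit (the comparison argument for summability and the treatment of the finitely many factors where the lower bound could be non-positive), but the underlying argument is the same.
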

\begin{proof}
The initial error is
$$\|I_d\|\ge\prod_{\ell=1}^d(1-C_1(\a_\ell\g_\ell)^2)^{1/2}=\exp\left(\mathcal O(1)-\frac12\sum_{\ell=1}^d(\a_\ell\g_\ell)^2\right).$$
$\tilde r(\balpha,\bgamma)>1/2$ implies that $\|I_d\|$ is uniformly bounded from below by a positive number. This shows that there is no difference between the absolute and normalized error criteria. This
means that we can apply Theorem~\ref{ThmAbsStd} for the
class $\lstd$ with $\e$ replaced by $\e\|I_d\|=\Theta(\e)$.
This completes the proof.
\qed
\end{proof}

\begin{acknowledgement}
We are grateful for many fruitful discussions with Peter Math\'e and several other colleagues.  This work was partially supported by US National Science Foundation grants DMS-1115392 and DMS-1357690.
\end{acknowledgement}

\bibliographystyle{spmpsci}
\bibliography{FJH22,FJHown22,XZ01}

\end{document}